\newtheorem{lemma}{Lemma}[section]
\newtheorem{cor}[lemma]{Corollary}
\newtheorem{claim*}{Claim}
\newtheorem{thm}[lemma]{Theorem}
\theoremstyle{remark}
\newtheorem{remark}[lemma]{Remark}
\newtheorem{example}[lemma]{Example}
\newcommand{\Lotimes}{\overset{L}{\otimes}}
\newcommand{\length}{\operatorname{length}}
\newcommand{\Spec}{\operatorname{Spec}}
\newcommand{\Ext}{\operatorname{Ext}}
\newcommand{\Tor}{\operatorname{Tor}}
\newcommand{\Proj}{\operatorname{Proj}}
\newcommand{\cO}{{\mathcal O}}
\newcommand{\codim}{\operatorname{codim}}
\newcommand{\CC}{\mathbb C}
\newcommand{\QQ}{\mathbb Q}
\newcommand{\ZZ}{\mathbb Z}
\newcommand{\HS}{\operatorname{HS}}
\title{Divergent series and Serre's intersection formula for graded rings}
\author{Daniel Erman}
\address{Department of Mathematics, University of Wisconsin, Madison, WI}
\email{\href{mailto:derman@math.wisc.edu}{derman@math.wisc.edu}}
\urladdr{\url{http://math.wisc.edu/~derman/}}
\begin{document} 
\thanks{The author was partially supported by NSF grant DMS-1302057.}
\maketitle


\vspace{-.25cm}
On a smooth variety, Serre's intersection formula~\cite[V.3]{serre} computes intersection multiplicities via an alternating sum of the lengths of $\Tor$ groups.  When the variety is singular, the corresponding sum can be a divergent series.  But there are alternate geometric approaches for assigning (often fractional) intersection multiplicities in some singular settings.  Our motivating question comes from Fulton, who asks whether an analytic continuation of the divergent series from Serre's formula can be related to these fractional multiplicities~\cite[Example 20.4.4]{fulton}.

In this note, we show how ideas from Avramov and Buchweitz~\cite{ab} answer Fulton's question in the context of graded rings.  Namely, \cite{ab} use a rational function to provide an analytic continuation of the Hilbert series of the total $\Tor$ module associated to an intersection (see Theorem \ref{thm:ab}).  Evaluating this rational function at $t=1$ provides a notion of intersection multiplicity that extends Serre's definition (see Corollary \ref{cor:main}).  We then show that the resulting multiplicity agrees with the multiplicities obtained for $\QQ$-Cartier divisors or on normal surfaces via alternate definitions used in algebraic geometry (see Theorem \ref{thm:QQCartier}).

For an isolated hypersurface singularity, similar techniques are used in~\cite[Theorem~2.2]{bvs} to study Hochster's Theta pairing.  Other related results are discussed in Remark~\ref{rmk}.

\bigskip

We illustrate this with an example.  Let $R:=\CC[x,y,z]/\langle f\rangle$ be the coordinate ring of a cubic curve.  Let $X=\Spec(R)\subseteq \mathbb A^3$.  If $L,L'\subseteq X$ are lines connecting distinct flex points to the origin, then $3L$ and $3L'$ are both Cartier divisors, and they intersect only at the origin $P\in X$.    Since $(3L\cdot 3L')_P=\deg X = 3$, the local intersection multiplicity $(L\cdot L')_P$ is $\frac{1}{3}$.

Now let $I,J$ be the ideals of these two lines.
Directly applying Serre's formula yields:
\[
\sum_{i\geq 0}^\infty (-1)^i\length\left( \Tor^R_i(R/I)\right)= 1-1+1-1+\dots
\]
To assign a value to this divergent series, we might first consider the series
\[
F(T):=\sum_{i=0}^\infty (-1)^i\length\left( \Tor^R_i(R/I,R/J)\right) \cdot T^i=1-T+T^2-T^3+\dots
\]
as suggested in~\cite[Example 20.4.4]{fulton}.  But the resulting analytic continuation at $T=1$ (by $\frac{1}{1+T}$) would have yielded a multiplicity of $F(1)=\frac{1}{2}$ instead of $\frac{1}{3}$.  

On the other hand, since everything is graded, there is another natural power series in the picture, 
namely 
the alternating sum of the Hilbert series of $\Tor_i(R/I,R/J)$. This is denoted $\chi(R/I,R/J)(t)$ in \cite{ab}. Since $\Tor_i$ is concentrated entirely in degree $\lfloor \frac{3i}{2}\rfloor$ we get
\[
\chi(R/I,R/J)(t) := \sum_{i=0}^\infty (-1)^i \HS_{\Tor_i(R/I,R/J)}(t)=1-t+t^3-t^4+t^6-t^7+\dots\in \ZZ[[t]]
\]
Using the rational function $\frac{1}{1+t+t^2}$ as our analytic continuation, we obtain the desired intersection multiplicity $\chi(R/I,R/J)(1)=\frac{1}{3}$.  

\bigskip

Generalizing this method provides an approach for defining intersection multiplicities in the singular case, at least as long as everything is graded.  
%
We fix a field $k$, let $R$ be a graded $k$-algebra (i.e. $R_0=k$), and let $M,N$ be finitely generated graded $R$-modules.  Following~\cite{ab}, we define
\begin{equation}\label{eqn:main def}\tag{*}
\chi(M,N)(t):=\sum_{i\geq 0}(-1)^i \HS_{\Tor_i(M, N)}(t),
\end{equation}
where $\HS_M(t)$ denotes the Hilbert series of a graded module $M$.  Note that $\chi(M,N)(t)$ can be thought of as the Hilbert series of the derived tensor product $M\Lotimes N$.

If either $M$ or $N$ has finite projective dimension, and if $M\otimes N$ has finite length, then $\chi(M,N)(t)$ is a Laurent polynomial in $\ZZ[t,t^{-1}]$.  Thus $\chi(M,N)(1)\in \ZZ$ and equals the alternating sum of the lengths of the $\Tor_i$, and it was shown by Peskine-Szpiro~\cite[Th\'{e}or\`{e}me, 2(iv)]{peskine-szpiro} that this extension of Serre's formula satisfies Vanishing and Positivity.\footnote{The corresponding result fails in the non-graded case.  See~\cite{dutta-hochster-mclaughlin}.}

In the case where $M$ and $N$ both have infinite projective dimension, $\Tor_i$ can be nonzero for infinitely many values of $i$, and thus Serre's formula would yield a divergent sum.  
However, Avramov and Buchweitz observed that $\chi(M,N)(t)$ admits a simple analytic continuation as a rational function.
\begin{thm}[Avramov-Buchweitz]\label{thm:ab}
Let $k$ be a field, $R$ a finitely graded $k$-algebra, and let $M,N$ finitely generated, graded $R$-modules.  Then
\[
\chi(M,N)(t) = \frac{1}{(1-t)^{\dim M + \dim N - \dim R}} \cdot e_{M,N}(t)
\]
where $e_{M,N}(t)\in \QQ(t)$ is a rational function.  Moreover, $e_{M,N}(1)\in \QQ_{>0}$.
\end{thm}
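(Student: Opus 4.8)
The plan is to reduce the whole statement to a single clean identity of Hilbert series and then read off the behavior at $t=1$. First I would record the convergence point that makes $\chi(M,N)(t)$ well defined. Choose a minimal graded free resolution $F_\bullet \to M$ over $R$. Minimality forces every entry of each differential to lie in $\mathfrak{m}=R_{>0}$, so the minimal generating degree of $F_i$ grows at least linearly in $i$. Since $N$ is finitely generated, hence bounded below, this means that in each fixed internal degree $d$ only finitely many of the graded pieces $(F_i\otimes_R N)_d$ are nonzero. Consequently the coefficient of $t^d$ in $\chi(M,N)(t)=\sum_i(-1)^i\HS_{\Tor_i(M,N)}(t)$ is a finite alternating sum, so $\chi(M,N)(t)$ is a genuine element of $\ZZ((t))$, independent of any finiteness of projective dimension.

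Next I would prove the master formula
\[
\chi(M,N)(t)=\frac{\HS_M(t)\,\HS_N(t)}{\HS_R(t)}.
\]
In each internal degree $d$ the complex $(F_\bullet\otimes_R N)_d$ is a bounded complex of finite-dimensional $k$-vector spaces, so its Euler characteristic may be computed either from its homology (the $\Tor$'s) or from its terms. Writing $F_i=\bigoplus_j R(-a_{ij})$ gives $\HS_{F_i\otimes_R N}(t)=\big(\sum_j t^{a_{ij}}\big)\HS_N(t)$, so summing over $i$ with signs yields $\chi(M,N)(t)=\big(\sum_i(-1)^i\sum_j t^{a_{ij}}\big)\HS_N(t)$. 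The same resolution, applied to $M$ itself, gives $\HS_M(t)=\big(\sum_i(-1)^i\sum_j t^{a_{ij}}\big)\HS_R(t)$, and dividing the two produces the master formula. Conceptually this is just multiplicativity of the Hilbert series as an invariant of $M\Lotimes N$, but the degree-by-degree bookkeeping above is what makes the infinite alternating sums legitimate.

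From here the theorem is immediate from dimension theory. By the Hilbert--Serre theorem each of $\HS_M$, $\HS_N$, $\HS_R$ is a rational function, so the master formula shows $\chi(M,N)(t)$ is rational, which is precisely the desired analytic continuation. The order of the pole at $t=1$ of the Hilbert series of a finitely generated graded module equals its Krull dimension, so $\HS_M$, $\HS_N$, $\HS_R$ have poles of orders $\dim M$, $\dim N$, $\dim R$, and the master formula forces $\chi(M,N)(t)$ to have a pole of order exactly $\dim M+\dim N-\dim R$ at $t=1$. Defining $e_{M,N}(t):=(1-t)^{\dim M+\dim N-\dim R}\chi(M,N)(t)$, its value at $t=1$ equals the ratio of the three leading Laurent coefficients, i.e. $e(M)\,e(N)/e(R)$ up to the positive factors $\prod a_i$ produced by $1-t^{a_i}=(1-t)(1+\cdots+t^{a_i-1})$ in the non-standard-graded case; since all multiplicities are positive, $e_{M,N}(1)\in\QQ_{>0}$.

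The step I expect to be the main obstacle is the rigorous passage between the homology and the terms of the \emph{infinite} complex $F_\bullet\otimes_R N$: one must ensure that interchanging the infinite alternating sum with the extraction of graded pieces is legitimate. This is exactly what the degree-growth estimate from minimality secures, reducing each identity to a finite Euler-characteristic computation in a single internal degree. The point is delicate because, for a general graded algebra, the individual Betti numbers can behave wildly and their Poincar\'e series need not even be rational; it is therefore essential that the argument never sums the Betti numbers directly, but only works one internal degree at a time, where finiteness is automatic.
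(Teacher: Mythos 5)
Your proposal is correct and takes essentially the same route as the paper: the paper's proof combines the factorization $\chi(M,N)(t)=\HS_M(t)\HS_N(t)/\HS_R(t)$ (Lemma~\ref{lem:rational hilb series}, cited as folklore from \cite{ab}) with the pole-order-equals-dimension statement for Hilbert series (Lemma~\ref{lem:hilb series}, cited from \cite{bruns-herzog}), exactly as you do. The only difference is that you supply the degree-by-degree Euler-characteristic proof of the folklore identity, including the convergence argument via minimal resolutions, where the paper simply cites the references.
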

\noindent The theorem essentially appears as~\cite[Proposition~8]{ab}.\footnote{That proposition assumes that $R$ is generated in degree $1$, but their proof works equally well for a nonstandard grading.}  Using this rational function as an analytic continuation of $\chi(M,N)(t)$, we can always define $\chi(M,N)(1)\in \QQ\cup \{\infty\}$.  

\begin{cor}\label{cor:main}
Let $k$ be a field, $R$ a finitely generated $k$-algebra, and let $M,N$ finitely generated, graded $R$-modules.
\begin{enumerate}
	\item  $\chi(M,N)(1)=\infty \iff \dim M + \dim N > \dim R$.
	\item  Positivity: $\chi(M,N)(1)\in \mathbb Q_{>0} \iff \dim M+\dim N = \dim R$.
	\item Nonvanishing: $\chi(M,N)(1)=0 \iff \dim M + \dim N<\dim R$.
\end{enumerate}
\end{cor}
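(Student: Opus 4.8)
The plan is to derive all three statements directly from the factorization in Theorem~\ref{thm:ab} by reading off the order of the pole or zero of $\chi(M,N)(t)$ at $t=1$. I would set $d:=\dim M+\dim N-\dim R$, so that Theorem~\ref{thm:ab} gives
\[
\chi(M,N)(t)=\frac{e_{M,N}(t)}{(1-t)^{d}},\qquad e_{M,N}(1)\in\QQ_{>0}.
\]
The single crucial input is that $e_{M,N}(1)$ is a \emph{finite, strictly positive} rational number; in particular $e_{M,N}(t)$ neither vanishes nor has a pole at $t=1$. Consequently the factor $(1-t)^{-d}$ alone governs the behavior of $\chi(M,N)(t)$ near $t=1$, and the order of vanishing of $\chi(M,N)(t)$ at $t=1$ is exactly $d$.

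Next I would split into three cases according to the sign of $d$. If $d>0$, then $(1-t)^{-d}$ has a pole of order $d$ at $t=1$ while the numerator $e_{M,N}(1)>0$ is nonzero, so $\chi(M,N)(1)=\infty$; this gives the forward direction of~(1). If $d=0$, then $\chi(M,N)(t)=e_{M,N}(t)$ and $\chi(M,N)(1)=e_{M,N}(1)\in\QQ_{>0}$, giving the forward direction of~(2). If $d<0$, I would write $d=-e$ with $e>0$, so that $\chi(M,N)(t)=(1-t)^{e}\,e_{M,N}(t)$ vanishes to order $e$ at $t=1$; since $e_{M,N}(1)$ is finite, $\chi(M,N)(1)=0$, giving the forward direction of~(3).

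Finally, because the three conditions $d>0$, $d=0$, and $d<0$ partition all possibilities, and the three conclusions ($\infty$, a positive rational, and $0$) are mutually exclusive, the forward implications automatically upgrade to the stated biconditionals. I do not expect a genuine obstacle here: essentially all the content of the corollary is already packaged into Theorem~\ref{thm:ab}, and the only point requiring care is the nonvanishing $e_{M,N}(1)\neq 0$. Without the strict positivity asserted there, a cancellation in the numerator could lower the order of the pole or raise the order of the zero at $t=1$, so it is precisely the positivity half of Theorem~\ref{thm:ab} that pins down the exact trichotomy.
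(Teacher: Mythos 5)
Your proposal is correct and is essentially the paper's own argument: the paper states that the corollary ``is an immediate consequence of Theorem~\ref{thm:ab},'' and the details you supply — reading off the order of the pole or zero of $(1-t)^{-d}$ at $t=1$, using that $e_{M,N}(1)\in\QQ_{>0}$ is finite and nonzero, and upgrading the forward implications to biconditionals via the trichotomy on $d$ — are exactly the intended unpacking of that one-line deduction. Your closing remark correctly pinpoints the positivity of $e_{M,N}(1)$ as the essential input that prevents cancellation from altering the pole or zero order.
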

\noindent The corollary, which has a precursor in \cite[Th\'{e}or\`{e}me, 2(iv)]{peskine-szpiro} in the case where $M$ or $N$ has finite projective dimension, is an immediate consequence of Theorem~\ref{thm:ab}.

\bigskip

For a graded singularity $\Spec R$, one thus has an intersection multiplicity $\chi(M,N)(1)$.  The contribution of this note is the observation that this definition recovers the multiplicities obtained by two alternate approaches from algebraic geometry: from $\QQ$-Cartier divisors on normal varieties, or from a resolution of singularities of a normal surface.  

\begin{thm}\label{thm:QQCartier}
Let $R$ be a graded $k$-algebra  that is a normal domain, and let $X=\Spec R$.  Let $C,D\subseteq X$ be an integral curve and an effective divisor that intersect properly at the origin $P\in X$ and that are defined by homogeneous ideals $I_C,I_D\subseteq R$. If either: $D$ is $\QQ$-Cartier or $X$ is a surface, then we have
\[
(D\cdot C)_P = \chi(R/I_D,R/I_C)(1) \text{ and lies in } \mathbb Q_{>0},
\]
where $(D\cdot C)_P$ denotes the local intersection multiplicity at $P$.
\end{thm}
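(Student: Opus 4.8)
Since $C$ and $D$ meet properly at $P$ we have $\dim R/I_C+\dim R/I_D=\dim R$, so Corollary~\ref{cor:main}(2) already gives $\chi(R/I_D,R/I_C)(1)\in\QQ_{>0}$; the content is to identify the value. Two formal properties drive the argument. First, the long exact sequence of $\Tor_\bullet(-,R/I_C)$ shows that $\chi(-,R/I_C)(t)$ is additive on short exact sequences of finitely generated graded modules, hence so is its value at $t=1$ whenever the terms are finite. Second, by Corollary~\ref{cor:main}(3) any module $M'$ with $\dim M'+\dim R/I_C<\dim R$ contributes $\chi(M',R/I_C)(1)=0$. Filtering $R/I_D$ with prime quotients and discarding the lower‑dimensional (hence vanishing) ones yields the cycle formula
\[
\chi(R/I_D,R/I_C)(1)=\sum_{\mathfrak p}\length_{R_{\mathfrak p}}\big((R/I_D)_{\mathfrak p}\big)\,\chi(R/\mathfrak p,R/I_C)(1),
\]
the sum running over the height‑one primes $\mathfrak p\supseteq I_D$. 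The coefficients are exactly the multiplicities of $D$ as a Weil divisor, and the geometric pairing $(D\cdot C)_P$ is likewise linear in the cycle $[D]$, so we may assume $D=V(\mathfrak p)$ is integral. If moreover $D$ is Cartier at $P$, then, localizing at $\mathfrak m$ (legitimate because all $\Tor_i$ are supported at the cone point), the local equation $f$ of $D$ is a nonzerodivisor on $R_{\mathfrak m}$ and, by properness, on the domain $(R/I_C)_{\mathfrak m}$; the Koszul resolution forces $\Tor_i=0$ for $i\geq1$ and $\Tor_0=(R/I_C)_{\mathfrak m}/f$, whence $\chi(R/I_D,R/I_C)(1)=\length((R/I_C)_{\mathfrak m}/f)=(D\cdot C)_P$.

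\textbf{The $\QQ$-Cartier case.} Choose $m$ with $mD$ Cartier at $P$. The divisorial ideal $I_{mD}=\cO_X(-mD)$ has cycle $m[D]$, so the cycle formula gives $\chi(R/I_{mD},R/I_C)(1)=m\,\chi(R/I_D,R/I_C)(1)$. Applying the Cartier base case to $mD$ (with its local equation at $P$) identifies the left–hand side with $(mD\cdot C)_P$, so
\[
m\,\chi(R/I_D,R/I_C)(1)=\chi(R/I_{mD},R/I_C)(1)=(mD\cdot C)_P=m\,(D\cdot C)_P,
\]
the last equality being the definition of intersection with a $\QQ$-Cartier divisor. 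Dividing by $m$ settles this case.

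\textbf{The surface case.} Here $\dim R=2$, and by the grading $X$ is the affine cone over the smooth projective curve $\bar X=\Proj R$, with vertex $P$. Let $\pi\colon Y=\Bl_P X\to X$; its exceptional locus is a single smooth curve $E\cong\bar X$ with $E^2=-e<0$, where $e$ is the degree of the polarization. Mumford's numerical pullback $\pi^{*}D=\tilde D+aE$ is pinned down by $\pi^{*}D\cdot E=0$, i.e. $a=(\tilde D\cdot E)/e$, and the local intersection number is
\[
(D\cdot C)_P=\pi^{*}D\cdot\tilde C=\tilde D\cdot\tilde C+\frac{(\tilde D\cdot E)(\tilde C\cdot E)}{e}.
\]
On the algebraic side, having reduced to integral $D$, Theorem~\ref{thm:ab} gives $\chi(R/I_D,R/I_C)(1)=e_{R/I_D,R/I_C}(1)$, since in the proper case the exponent $\dim R/I_D+\dim R/I_C-\dim R$ vanishes. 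The plan is to compute this rational number from the graded Hilbert series on the cone, expressing it through the degrees $\tilde D\cdot E$, $\tilde C\cdot E$ on $\bar X$ together with $e$, and to check that it reproduces the displayed formula.

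\textbf{Main obstacle.} The formal steps—positivity, cycle‑linearity, the Cartier base case, and therefore the whole $\QQ$-Cartier case—are routine consequences of Corollary~\ref{cor:main} and the Koszul resolution. The genuine difficulty is the surface case: matching the purely algebraic value $e_{R/I_D,R/I_C}(1)$ furnished by Avramov–Buchweitz with Mumford's resolution‑theoretic correction term $\tfrac{(\tilde D\cdot E)(\tilde C\cdot E)}{e}$. This cannot be reduced to the Cartier case, because when the singularity is non‑rational (for instance a cone over an elliptic curve) both $C$ and $D$ may be non‑$\QQ$-Cartier and the integral divisor $m\pi^{*}D$ on $Y$ fails to descend to a Cartier divisor on $X$. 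One must instead carry out the Hilbert‑series bookkeeping on the cone directly—or, equivalently, track the derived pushforward of $R/I_D\Lotimes R/I_C$ along $\pi$—and verify that the rational denominator $e$ and the off‑diagonal term emerge correctly from the graded data.
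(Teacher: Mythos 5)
Your $\QQ$-Cartier argument is correct, and its middle step is genuinely different from the paper's. Both proofs treat the Cartier multiple $mD=E$ identically (the Koszul resolution and Tor-vanishing give $\chi(R/I_E,R/I_C)(1)=(E\cdot C)_P$), but to relate $\chi(R/I_E,R/I_C)(1)$ to $e\cdot\chi(R/I_D,R/I_C)(1)$ the paper passes to $X_0=\Proj R$ and invokes Asymptotic Riemann--Roch to prove the leading-coefficient identity $m_E(1)=H_0^{n-1}\cdot E_0=e\,m_D(1)$ (in the notation of Lemma~\ref{lem:hilb series}), then concludes via $\chi(M,N)(t)=\HS_M(t)\HS_N(t)/\HS_R(t)$. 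You obtain the same scaling by pure d\'evissage: additivity of $\chi(-,R/I_C)(t)$ on short exact sequences, a prime filtration of $R/I_{mD}$, and Corollary~\ref{cor:main}(3) to kill the lower-dimensional quotients (the degree shifts $(R/\mathfrak{p})(-a)$ occurring in such a filtration are harmless, since $t^{a}=1$ at $t=1$). This is more elementary---no projective geometry at all---and it has the side benefit of justifying the reduction to integral $D$, which the paper's surface case uses only implicitly.

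The genuine gap is the surface case, which you set up but do not prove: you end with ``the plan is to compute\dots'' and label the required matching the ``main obstacle.'' Since the theorem asserts the equality in exactly that case, this is a missing half of the proof, not a technicality. It is also far easier than you suggest; no derived pushforward along $\pi$ or intricate Hilbert-series bookkeeping is needed. Because $C$ and (after your reduction) $D$ are integral curves through the vertex of the cone $X$, they are cones over single points $C_0\neq D_0$ of the smooth curve $X_0=\Proj R$; hence for $r\gg 0$ one has $\dim(R/I_C)_r=h^0\bigl(C_0,\cO_{C_0}(r)\bigr)=1$, so $m_C(1)=m_D(1)=1$, while $m_R(1)=\deg H_0$. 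On the blow-up, $\widetilde{C}$ and $\widetilde{D}$ are disjoint and each meets the exceptional curve $E$ transversally in one point, so $\widetilde{C}\cdot\widetilde{D}=0$, $\widetilde{C}\cdot E=\widetilde{D}\cdot E=1$, and $E^2=-\deg H_0$. Your own displayed formula then evaluates to
\[
(D\cdot C)_P=\widetilde{D}\cdot\widetilde{C}+\frac{(\widetilde{D}\cdot E)(\widetilde{C}\cdot E)}{\deg H_0}=\frac{1}{\deg H_0},
\]
while the algebraic side is $\chi(R/I_D,R/I_C)(1)=\frac{m_D(1)\,m_C(1)}{m_R(1)}=\frac{1}{\deg H_0}$ by the proof of Theorem~\ref{thm:ab}. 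This two-line computation is precisely how the paper closes the surface case; note that it never reduces to the Cartier case, so the obstruction you raise for non-rational singularities (cones over elliptic curves, where $C$ and $D$ need not be $\QQ$-Cartier) simply does not enter.
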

See~\cite[Example 7.1.16]{fulton} or ~\cite[II(b)]{mumford} for the definition of $(D\cdot C)_P$ on a normal surface.  For a Cartier divisor $E$ defined locally by a function $f$, and intersecting a curve $C$ properly at point $P$, we use the standard definition $(E\cdot C)_P:=\length \cO_{C,P}/\langle f\rangle$.  Then for a $\QQ$-Cartier divisor $D$ where $eD$ is Cartier, we take the natural local version of~\cite[Definition~1.4]{kollar}, setting $(D\cdot C)_P:-=\frac{1}{e}(eD\cdot C)_P$.

\bigskip

This note is structured as follows.  In \S\ref{sec:background}, we gather some of the background on Hilbert series that we will use in our proof.  \S\ref{sec:proof} contains the proof of Theorem~\ref{thm:QQCartier}. In \S\ref{sec:examples}, we consider examples of computing $\chi(M,N)(1)$ and we discuss the relation with some other results from the literature.

\section*{Acknowledgements}
We are grateful to many people for helpful conversations and references:
 Dima Arinkin, Lucho Avramov, Bhargav Bhatt, Ragnar-Olaf Buchweitz, Andrei C\u{a}ld\u{a}raru, William Fulton, Craig Huneke, and Kevin Tucker.

\section{Background}\label{sec:background}
In this section we summarize some results about Hilbert series.  Let $R$ be a finitely generated, graded $k$-algebra with generators in degrees $d_1,\dots,d_s$.  Let $S=k[x_1,\dots,x_s]$ with generators in the same degrees and with a surjection $S\to R$ of graded rings.  We write $\HS_M(t)$ for the Hilbert series of a graded module $M$.

\begin{lemma}\label{lem:hilb series}
For any finitely generated, graded $R$-module $M$, we have
\[
\HS_M(t)= \frac{1}{(1-t)^{\dim M}}\cdot e_M(t) \text{ where }  e_M(t)\in \QQ(t) \text{ and } e_M(1)\in \mathbb Q_{>0}.
\]
\end{lemma}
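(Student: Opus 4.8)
The plan is to reduce to a polynomial ring, apply the Hilbert--Serre theorem to get rationality, and then extract both the pole order and the positivity. First I would pass from $R$ to $S$. Via the surjection $S\to R$, the module $M$ becomes a finitely generated graded $S$-module whose Hilbert series is unchanged, and whose dimension is unchanged since $\dim M=\dim S/\ann_S M$ depends only on the support. So I may assume $R=S=k[x_1,\dots,x_s]$ with $\deg x_i=d_i$. The Hilbert--Serre theorem (proven by induction on $s$ using the graded exact sequences attached to multiplication by $x_i$) then gives $\HS_M(t)=P(t)/\prod_{i=1}^s(1-t^{d_i})$ for some $P(t)\in\ZZ[t]$. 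Factoring $1-t^{d_i}=(1-t)\Phi_i(t)$ with $\Phi_i(t)=1+t+\dots+t^{d_i-1}$ and $\Phi_i(1)=d_i>0$ exhibits $\HS_M(t)$ as a rational function whose only poles lie at roots of unity.

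The crux is to show that the order of the pole of $\HS_M(t)$ at $t=1$ is exactly $\dim M$; granting this, I set $e_M(t):=(1-t)^{\dim M}\HS_M(t)\in\QQ(t)$, which is then regular and nonzero at $t=1$. To compute the pole order I would use a graded prime filtration $0=M_0\subset M_1\subset\dots\subset M_\ell=M$ with $M_j/M_{j-1}\cong(S/\mathfrak p_j)(a_j)$; since Hilbert series are additive in short exact sequences and $\dim M=\max_j\dim S/\mathfrak p_j$, it suffices to treat a graded domain $N=S/\mathfrak p$ of dimension $d'$. For such $N$, homogeneous Noether normalization provides a finite inclusion $A=k[\theta_1,\dots,\theta_{d'}]\hookrightarrow N$ with $\theta_i$ homogeneous of degree $e_i$; as $N$ is a torsion-free $A$-module of some positive rank $r$, it contains a free submodule $A^{\oplus r}$ whose cokernel is supported in dimension $<d'$. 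Comparing Hilbert series shows that $\HS_N$ has the same pole order at $t=1$ as $\HS_A=1/\prod_i(1-t^{e_i})$, namely $d'$, and that $e_N(1)=r/\prod_i e_i>0$.

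Finally, positivity of $e_M(1)$ follows cleanly. Reassembling the filtration gives $e_M(t)=\sum_j t^{a_j}\,e_{S/\mathfrak p_j}(t)\,(1-t)^{\dim M-\dim S/\mathfrak p_j}$, so that $e_M(1)=\sum_{j:\,\dim S/\mathfrak p_j=\dim M}e_{S/\mathfrak p_j}(1)$; every such summand is positive by the base case and at least one occurs, so $e_M(1)>0$ (this also re-confirms that the pole order is exactly $\dim M$, since no cancellation occurs in the top-dimensional term). Alternatively, one can argue positivity directly: for $t\in(0,1)$ the series $\HS_M(t)=\sum_n(\dim_k M_n)t^n$ has nonnegative terms and $(1-t)^{\dim M}>0$, so $e_M(t)\geq 0$ on $(0,1)$, whence $e_M(1)=\lim_{t\to 1^-}e_M(t)\geq 0$ by continuity at $t=1$, and $e_M(1)\neq 0$ by the pole-order computation. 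I expect the main obstacle to be the middle step, namely pinning down the pole order as $\dim M$ in the weighted-graded setting, where the Hilbert function is only a quasi-polynomial in $n$; by contrast the reduction to $S$ and the concluding positivity argument are comparatively routine.
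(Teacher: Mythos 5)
Your proof is correct, but it takes a genuinely different route from the paper, for the simple reason that the paper does not prove this lemma at all: its entire ``proof'' is a citation to \cite[Proposition~4.4.1]{bruns-herzog}, noted as ``equivalent but in a slightly different form'' (that proposition expresses $\HS_M(t)$ as $Q_M(t)/\prod_{i=1}^{\dim M}(1-t^{a_i})$ with $Q_M(1)>0$, from which the lemma follows by factoring each $1-t^{a_i}$ as $(1-t)(1+t+\dots+t^{a_i-1})$). What you have done is essentially reconstruct the proof of the cited result from scratch: Hilbert--Serre rationality over the weighted polynomial ring, a graded prime filtration to reduce to the domain case $S/\mathfrak p$, graded Noether normalization plus the torsion-free rank argument for that case, and the no-cancellation observation among top-dimensional terms for positivity. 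Your version buys self-containedness, and in particular verifies the paper's footnote-level claim that everything works for nonstandard gradings; the citation buys brevity. Two points to tighten in a final write-up, neither of which is a gap in the ideas. First, the free submodule of $N$ should be a \emph{shifted} graded free module $\bigoplus_{i=1}^{r}A(-b_i)$, obtained by choosing homogeneous elements of $N$ that form a $\Frac(A)$-basis of $N\otimes_A\Frac(A)$; the shifts only multiply terms by powers of $t$ and change nothing at $t=1$. Second, your assertion that the cokernel $N/F$, having dimension $<d'$, contributes a pole of order $<d'$ invokes precisely the statement being proved, applied to a lower-dimensional module; so the whole argument should be organized as an induction on $\dim M$, with the filtration and normalization steps serving as the inductive mechanism. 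As you structured it, that induction is implicit but not stated, and making it explicit removes any appearance of circularity.
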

\begin{proof}See~\cite[Proposition 4.4.1]{bruns-herzog}, which is equivalent but in a slightly different form.
\end{proof}

\begin{lemma}\label{lem:rational hilb series} Let $M, N$ be finitely generated $R$-modules.  Then
\[
\chi(M,N)(t)=\frac{\HS_M(t)\HS_N(t)}{\HS_R(t)}.
\]
In particular, $\chi(M,N)(t)$ is a rational function.
\end{lemma}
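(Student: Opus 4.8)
The plan is to evaluate both sides against a single graded free resolution of $M$ and to compare graded Euler characteristics degree by degree. First I would choose a minimal graded free resolution $F_\bullet \to M$ over $R$, writing each term as $F_i = \bigoplus_j R(-a_{ij})$ and recording the twist data in the Laurent polynomials $P_i(t) := \sum_j t^{a_{ij}}$, so that $\HS_{F_i}(t) = P_i(t)\,\HS_R(t)$. The crucial formal object is the alternating sum $P(t) := \sum_{i\geq 0}(-1)^i P_i(t)$, which plays the role of the $K$-polynomial of $M$ relative to $R$.

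Then I would run two parallel additivity computations. Applying additivity of the Hilbert series along the exact resolution $F_\bullet \to M$ gives $\HS_M(t) = \sum_i (-1)^i \HS_{F_i}(t) = P(t)\,\HS_R(t)$, hence $P(t) = \HS_M(t)/\HS_R(t)$. For the other side, note $\Tor_i(M,N) = H_i(F_\bullet \otimes_R N)$ and $F_i \otimes_R N = \bigoplus_j N(-a_{ij})$, so $\HS_{F_i\otimes N}(t) = P_i(t)\,\HS_N(t)$. Applying additivity of the Hilbert series to the complex $F_\bullet \otimes_R N$ (the alternating sum of the Hilbert series of its terms equals that of its homology) yields $\chi(M,N)(t) = \sum_i (-1)^i \HS_{\Tor_i(M,N)}(t) = \sum_i(-1)^i \HS_{F_i\otimes N}(t) = P(t)\,\HS_N(t)$. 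Substituting $P(t) = \HS_M(t)/\HS_R(t)$ gives the claimed formula, and rationality then follows since each of $\HS_M(t)$, $\HS_N(t)$, $\HS_R(t)$ is rational by Lemma~\ref{lem:hilb series}.

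The main obstacle is justifying these infinite alternating sums: when $M$ has infinite projective dimension the resolution $F_\bullet$ does not terminate, and neither $\sum_i(-1)^i P_i(t)$ nor the additivity statements are a priori meaningful. The remedy is a degree-wise finiteness argument. Because the resolution is minimal and $R$ is positively graded (generators in degrees $d_i \geq 1$, as implicit in Lemma~\ref{lem:hilb series}), the differentials have entries in $\m$, so the minimal twist occurring in $F_i$ strictly increases with $i$. Fixing a total degree $d$ and using that the finitely generated module $N$ is bounded below, only finitely many $a_{ij}$ satisfy $a_{ij} \leq d - \min\deg(N)$; hence $(F_i \otimes_R N)_d = 0$ for all large $i$. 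Thus in each fixed degree the complex $(F_\bullet\otimes_R N)_d$ is a finite complex of finite-dimensional vector spaces, the coefficient of $t^d$ in $P(t)$ is a finite sum, and the standard bounded Euler-characteristic identity applies. This legitimizes every manipulation above as an identity of formal power series, after which the equality of rational functions follows.
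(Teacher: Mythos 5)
Your proof is correct, but it takes a genuinely different route from the paper, which offers no argument at all for this lemma: it simply cites \cite[Lemma 7(ii)]{ab}, where the identity is recorded as mathematical folklore. What you have written out is, in effect, that folklore argument made explicit. The two parallel Euler-characteristic computations against a single minimal graded free resolution --- giving $\HS_M(t)=P(t)\,\HS_R(t)$ on one side and $\chi(M,N)(t)=P(t)\,\HS_N(t)$ on the other --- are exactly the right mechanism, and you correctly isolate the only delicate point: when $\pdim_R M=\infty$ the sums are infinite, and one needs minimality together with the positive grading of $R$ (i.e.\ $R_0=k$ and algebra generators in positive degrees) to force $\min_j a_{ij}\to\infty$, so that in each fixed internal degree the complexes $F_\bullet$ and $F_\bullet\otimes_R N$ are bounded complexes of finite-dimensional vector spaces and every alternating sum is coefficientwise finite. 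One small step you elide: to get the strict increase of the minimal twist you should also note that a basis element of $F_{i+1}$ cannot map to zero (by exactness it would be a boundary, hence lie in $\m F_{i+1}$, contradicting minimality); granting that, every relevant matrix entry is a nonzero element of $\m$ and yields $a_{i+1,j}\geq \min_k a_{ik}+1$, completing your finiteness claim. As for what each approach buys: the paper's citation keeps the note short and points to the same source as Theorem~\ref{thm:ab}, while your version makes the lemma self-contained, visibly works for nonstandard gradings (which the paper needs, since its rings are not assumed generated in degree $1$), and shows exactly where the hypotheses --- $R$ Noetherian and positively graded, $M$ and $N$ finitely generated and bounded below --- enter the argument.
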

\begin{proof}
This is proven in ~\cite[Lemma 7(ii)]{ab}, where it is cited as mathematical folklore.
\end{proof}

\begin{proof}[Proof of Theorem~\ref{thm:ab}]
By Lemma~\ref{lem:rational hilb series}, we have $\chi(M,N)(t) = \frac{\HS_M(t)\HS_N(t)}{\HS_R(t)}$.  We then apply Lemma~\ref{lem:hilb series} to obtain
\[
\chi(M,N)(t) =\frac{\left(\frac{1}{(1-t)^{\dim M}}\cdot e_M(t) \right)\cdot\left( \frac{1}{(1-t)^{\dim N}}\cdot e_N(t)\right)}{\frac{1}{(1-t)^{\dim R}}\cdot e_R(t)}=\frac{(1-t)^{\dim R}}{(1-t)^{\dim M+\dim N}} \cdot \frac{e_M(t)e_N(t)}{e_R(t)}.
\]
This is exactly the statement of~\cite[Proposition~8(i)]{ab}, which is only stated in the case where $R$ is generated in degree $1$, but their proof applies just as well in this context.  Since $e_M(1),e_N(1),e_R(1)$ are all strictly positive, it follows that $\frac{e_M(1)e_N(1)}{e_R(1)}\in \mathbb Q_{>0}$.  Setting $e_{M,N}(t):=\frac{e_M(t)e_N(t)}{e_R(t)}$ and evaluating at $t=1$, we obtain all three parts of the theorem.
\end{proof}


%
%

\section{Proof of Main Result}\label{sec:proof}

\begin{proof}[Proof of Theorem~\ref{thm:QQCartier}]  {\bf $\QQ$-Cartier case:}  
Since $D$ is $\QQ$-Cartier, we may assume that $eD$ equals a Cartier divisor $E$ defined by the homogeneous principal ideal $I_E=\langle f\rangle$.  By definition, we then have
\[
(D\cdot C)_P:=\tfrac{1}{e}(E\cdot C)_P=\tfrac{1}{e} \dim_k \cO_{C,P}/\langle f|_{C}\rangle.
\]
Since $C$ is not contained in $D$, it is also not contained in $E$ and thus $f|_{C}\ne 0$.  It follows that $\Tor^R_i(R_E,R_C)\ne 0$ if and only if $i=0$.  We can then compute $\chi(R_E,R_C)(1) = \dim_k \Tor_0(R_E, R_C)=\dim_k \cO_{C,P}/\langle f|_{C}\rangle$.  We thus have
\begin{equation}\label{eqn:DC chi}
(D\cdot C)_P = \tfrac{1}{e} \chi(R_E,R_C)(1).
\end{equation}

We next relate the constants  $m_D(1)$ and $m_E(1)$.  We define $X_0:=\Proj R$, which is a normal projective variety, and we let $D_0=\Proj(R/I_D)$ and $E_0=\Proj(R/I_E)$ be the corresponding divisors on $X_0$.  Note that $eD_0$ is still equivalent to the Cartier divisor $E_0$.  Moreover, if we write $H_0$ for the Weil divisor corresponding to $\cO_{X_0}(1)$, then $D_0, E_0,$ and $H_0$ are all positive rational multiples of one another, and hence each is ample.  

Let $n$ be the dimension of $X_0$.  For any $r$ we have the short exact sequence
\[
0\to \cO_{X_0}(-D_0+rH_0) \to \cO_{X_0}(rH_0)\to \cO_{D_0}(rH_0|_{D_0})\to 0.
\]
Thus, for $r\gg 0$ we compute
\begin{align*}
\dim (R/I_D)_r &= h^0(D_0,\cO_{D_0}(rH_0|_{D_0})) \\
&= h^0(X_0,\cO_{X_0}(rH_0)) - h^0(X_0,\cO_{X_0}(-D_0+rH_0)),
\intertext{Since $X_0$ is normal, we may apply Asymptotic Riemann-Roch to compute the above two terms up to $O(r^{n-2})$, and after some simple cancellations we obtain:}
&= \frac{H_0^{n-1} \cdot D_0}{(n-1)!}r^{n-2}+ O(r^{n-2}).
\end{align*}
In the above line, the intersection product is computed on $X_0$.  Since Lemma~\ref{lem:hilb series} implies that $\HS_{R/I_D}(t)=\frac{1}{(1-t)^{n}} m_D(t)$, we can also write $\dim (R/I_D)_r = \frac{m_D(1)}{(n-1)!} r^{n-1}+O(r^{n-2})$.  We thus conclude that 
\[
m_D(1) = H_0^{n-1} \cdot D_0.
\]
A similar computation yields $m_E(1)=H_0^{n-1}\cdot E_0$, which then implies that
\begin{equation}\label{eqn:me and md}
m_E(1)=e\cdot m_D(1).
\end{equation}

Finally, we turn our attention to comparing the series $\chi(R_E,R_C)(t)$ and $\chi(R_D,R_C)(t)$:
\begin{align*}
\chi(R_E,R_C)(t) - e\cdot \chi(R_D,R_C)(t)&= \frac{m_E(t)m_C(t)}{m_R(t)} - \frac{e\cdot m_D(t)m_C(t)}{m_R(t)}\\
&=(m_E(t)-e\cdot m_D(t)) \frac{m_C(t)}{m_R(t)}.
\intertext{But by \eqref{eqn:me and md}, $m_E(1)=e\cdot m_D(1)$, and so plugging in $t=1$ yields}
\chi(R_E,R_C)(1) - e\cdot\chi(R_D,R_C)(1)&=0.
\end{align*}
Combining this with \eqref{eqn:DC chi}, we conclude that
\[
(D\cdot C)_P= \tfrac{1}{e}\chi(R_E,R_C)(1) = \chi(R_D,R_C)(1).
\]

\smallskip

{\bf Normal surface case:}  Next we take the case where $X$ is a normal surface.  
We define $X_0=\Proj(R)$, which is a smooth curve since $R$ is a normal, graded ring of dimension $2$.  Let $H_0$ be the ample Cartier divisor corresponding to $\cO_{X_0}(1)$.  We define $C_0$ and $D_0$ as the divisors corresponding to $C$ and $D$ on $X_0$.  Since $C,D$ are integral, we have that $C_0$ and $D_0$ is each a single point on $X_0$.  For all $r\gg 0$, we have
\[
\dim(R/I_C)_r = H^0(C_0,\cO_{C_0}(r)) = \deg C_0=1.
\]
Thus $m_C(1)=1$.  Similarly $m_D(1)=\deg D_0=1$.  Moreover, for $r\gg 0$, we can use Riemann-Roch to compute
\[
\dim R_r = H^0(X_0,\cO_{X_0}(rH_0))=(\deg H_0)\cdot r + O(1)
\]
and hence we also have $m_R(1) = \deg H_0$.

Blowing up the cone point of $X$, we get a resolution $\pi: \widetilde{X}\to X$ with exceptional divisor $E$ where $E^2=-\deg H_0=-m_R(1)$.  
We use Mumford's definition (see~\cite[Example~7.1.16]{fulton} or \cite[II(b)]{mumford}) to define
\[
(C\cdot D)_P:=\pi^*_{\text{num}}C \cdot \pi^*_{\text{num}}D.
\]
where $\pi^*_{\text{num}}C$ is the unique $\QQ$-divisor on $\widetilde{X}$ that pushes forward to $C$ and that intersects the exceptional divisor in multiplicity $0$, and where $\pi^*_{\text{num}}D$ is similarly defined.  It follows that $\pi^*_{\text{num}}C=\widetilde{C}+\frac{1}{m_R(1)} E$ and similarly for $D$.  We then compute:
\begin{align*}
\pi^*_{\text{num}}C \cdot \pi^*_{\text{num}}D &= \widetilde{C}\cdot \widetilde{D} +\tfrac{1}{m_R(1)}\left(\widetilde{C}\cdot E+E\cdot \widetilde{D}\right) +\tfrac{1}{m_R(1)^2} \left(E^2\right) \\
&= 0+\tfrac{2}{m_R(1)}-\tfrac{1}{m_R(1)}  \\
&= \tfrac{1}{m_R(1)}\\
&=\tfrac{m_C(1)m_D(1)}{m_R(1)}.
\end{align*}
This is precisely equal to $\chi(R/I_D,R/I_C)(1)$ by the proof of Theorem~\ref{thm:ab} from above.
\end{proof}

\section{Examples and remarks}\label{sec:examples}
%

\begin{example}[Rational normal cone]
Let $X\subseteq \mathbb A^{d+1}=\Spec(k[x_0,\dots,x_d])$ be the cone over the rational normal curve of degree $d$.   Let $L$ be the class of a line through the origin.  Note that $L$ is a $\QQ$-Cartier divisor and $L^2=\frac{1}{d}$.
%
Let $R$ be the coordinate ring of $X$, so that $R$ is $k[x_0,\dots,x_d]$ modulo the $2\times 2$ minors of the matrix $\begin{pmatrix} x_0&x_1&\dots&x_{d-1}\\x_1&x_2&\dots&x_d\end{pmatrix}$.  Let $I_1=\langle x_0,\dots,x_{d-1}\rangle$ and $I_2=\langle x_1,\dots,x_d\rangle$ be ideals of two different lines $L_1$ and $L_2$ on $X$.  

A direct computation (using e.g. Lemma~\ref{lem:rational hilb series}) yields:
\begin{align*}
\chi(R/I_1,R/I_2)(t) &= 1-(d-1)t+(d-1)^2t^2-(d-1)^3t^3+(d-1)^4t^4-\dots \\
&= \frac{1}{1+(d-1)t}.\\
\end{align*}
Evaluating at $t=1$ we get $\chi(R/I_1,R/I_2)(1)= \frac{1}{1+(d-1)}=\frac{1}{d}$.
%
%
\end{example}
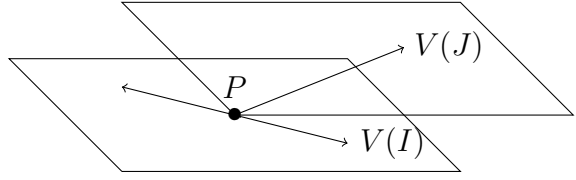
\begin{figure}
\begin{tikzpicture}[scale=1.5]
\draw[-](4,4)--(7,4)--(6,5)--(3,5)--(4,4);
\draw[-](3,3.5)--(6,3.5)--(5,4.5)--(2,4.5)--(3,3.5);
\draw[<->] (4,4)--(5.5,4.6);
\draw[<->] (3,4.25)--(5,3.75);
\draw(5.4,3.75) node {$V(I)$};
\draw(5.9,4.6) node {$V(J)$};
\draw(4,4) node {$\bullet$};
\draw(4,4.25) node {$P$};
\end{tikzpicture}
\caption{In Example~\ref{ex:non normal}, $X$ is the union of the two planes meeting at a point $P$.  Two lines $V(I)$ and $V(J)$ intersect at $P$ with multiplicity $\frac{1}{2}$.}
\label{fig:bracket}
\end{figure}
\begin{example} \label{ex:non normal}The intersection formulas from Corollary~\ref{cor:main} make sense even when $X$ fails to be normal.
For instance, let $R = \QQ[x,y,z,w]/(xz,xw,yz,yw)$ be the coordinate ring of two planes in $\mathbb A^4$ intersecting at the origin.  Let $I = (x,y,w)$  and $J=(y,z,w)$ be the ideals of two lines.  Then
\begin{align*}
\chi(R/I,R/J)(t)&= (1-2t+5t^2-12t^3+29t^4-70t^5+169t^6-\dots) \\
&= \left(\frac{1}{1+2t-t^2}\right).
\end{align*}
Hence $\chi(R/I,R/J)(1)=\frac{1}{2}$.  
\end{example}

\begin{example}\label{ex:cone over quadric}
Let $X=\Spec(k[x,y,z,w]/(xw-yz))$ be the cone over the quadric surface.  Let $D=V(x,y)$ and $E=V(z,w)$, which are both planes lying in $X$.   Note that $D\cap E$ is supported on the origin, so we have
\[
3=\codim D\cap E>\codim D +\codim E=2.
\]

Write $I_D, I_E$ for the defining ideals of $D$ and $E$.  We have
\begin{align*}
\chi(R/I_D, R/I_E)(t) &= 1 + t^2+t^4+\dots \\
&= \frac{1}{1-t^2}.
\end{align*}
This has a pole at $t=1$, hence $\chi(R/I_D,R/I_E)(1)=\infty$.  
\end{example}

\begin{remark}\label{rmk}
We conclude with several remarks:
\begin{enumerate}
	\item  A similar rational function expression for the total $\Ext$-module $\Ext^*(M,N)$ is given in~\cite{abs} where it was used to recover and greatly expand on a ramification formula for modular invariants from~\cite{bens-craw}.
	\item  In~\cite{bvs}, the authors use the series $\chi(M,N)(t)$ in their analysis of Hochster's Theta pairing in the case of a graded hypersurface singularity.  The authors go on to analyze the nongraded case as well, relating the Theta pairing to a linking number of associated cycles, thus proving a conjecture of Steenbrink.
	\item  In~\cite{gulliksen}, Gulliksen considers another way to obtain a Serre-like intersection formula in the case of non-regular rings (and not just in the graded case).  In particular, he proves that if you take a surjective map $S\to R$, where $S$ is a regular ring of minimal embedding dimension, then the resulting sum
	\[
	\chi^S(M,N) := \sum_{i} (-1)^i \dim \Tor^S_i(M,N)
	\]
	will not depend on the choice of the ring $S$.  In the graded case, this formula gives different results than the formula $\chi(M,N)(1)$.  For instance, in the example from the introduction, we could choose $S:=\CC[x,y,z]$ and then
	\[
	\chi^S(R/I,R/J) =  3 - 3 = 0.
	\]
In fact, this equals zero by~\cite[IV.3]{serre} because $\dim R/I + \dim R/J=\dim R<\dim S$, and something similar will hold for every non-trivial example where Corollary~\ref{cor:main} yields a positive, rational multiplicity.
	\item  Serre's definition contains the implicit idea of attaching a multiplicity to an object like $R/I\Lotimes_R R/J$.  A similar idea appears in~\cite{gorbounov-schechtman}, where the authors use techniques for summing divergent series to show how to recover numerical data, like degree, in terms of homological data contained in what they call a semi-free resolution.
	\item For any flat family of graded modules $M_t$ over a base $T$, the Hilbert series $M_t\Lotimes N$ does not depend on $t\in T$.  In particular, $\chi(M,N)(1)$ is finite whenever $\dim M+\dim N\leq \dim R$, regardless of whether $M\otimes N$ has finite length.  It is thus possible to define degenerate intersection multiplicities like $\chi(R/I,R/I)(1)$.
	\item  The proof of Theorem~\ref{thm:QQCartier} relies on the fact that $m_{R/I_D^e}(1)=e \cdot m_{R/I_D}(1)$ when $R$ is normal and $I_D$ is a cxdimension $1$ ideal.  This can fail when $R$ is not normal.  For instance, let $R=\Spec(\CC[x,y,z]/\langle y^2z-x^3\rangle)$ and let $D$ be the line defined by the ideal $I_D=\langle x,y\rangle$.  Then $m_{R/I_D}(1)=1$ whereas $m_{R/I_D^2}(1)=3$.
\end{enumerate}
\end{remark}

\begin{bibdiv}
\begin{biblist}

\bib{ab}{article}{
   author={Avramov, Luchezar L.},
   author={Buchweitz, Ragnar-Olaf},
   title={Lower bounds for Betti numbers},
   journal={Compositio Math.},
   volume={86},
   date={1993},
   number={2},
   pages={147--158},
}

\bib{abs}{article}{
   author={Avramov, Luchezar L.},
   author={Buchweitz, Ragnar-Olaf},
   author={Sally, Judith D.},
   title={Laurent coefficients and ${\rm Ext}$ of finite graded modules},
   journal={Math. Ann.},
   volume={307},
   date={1997},
   number={3},
   pages={401--415},
}

\bib{bens-craw}{article}{
   author={Benson, D. J.},
   author={Crawley-Boevey, W. W.},
   title={A ramification formula for Poincar\'e series, and a hyperplane
   formula for modular invariants},
   journal={Bull. London Math. Soc.},
   volume={27},
   date={1995},
   number={5},
   pages={435--440},
   issn={0024-6093},
}

\bib{bruns-herzog}{book}{
   author={Bruns, Winfried},
   author={Herzog, J{\"u}rgen},
   title={Cohen-Macaulay rings},
   series={Cambridge Studies in Advanced Mathematics},
   volume={39},
   publisher={Cambridge University Press, Cambridge},
   date={1993},
   pages={xii+403},
}

\bib{bvs}{article}{
   author={Buchweitz, Ragnar-Olaf},
   author={Van Straten, Duco},
   title={An index theorem for modules on a hypersurface singularity},
   language={English, with English and Russian summaries},
   journal={Mosc. Math. J.},
   volume={12},
   date={2012},
   number={2},
   pages={237--259, 459},
}

\bib{dutta-hochster-mclaughlin}{article}{
   author={Dutta, Sankar P.},
   author={Hochster, M.},
   author={McLaughlin, J. E.},
   title={Modules of finite projective dimension with negative intersection
   multiplicities},
   journal={Invent. Math.},
   volume={79},
   date={1985},
   number={2},
   pages={253--291},
}

\bib{fulton}{book}{
   author={Fulton, William},
   title={Intersection theory},
   series={Ergebnisse der Mathematik und ihrer Grenzgebiete. 3. Folge. A
   Series of Modern Surveys in Mathematics [Results in Mathematics and
   Related Areas. 3rd Series. A Series of Modern Surveys in Mathematics]},
   volume={2},
   edition={2},
   publisher={Springer-Verlag, Berlin},
   date={1998},
   pages={xiv+470},
}

\bib{gorbounov-schechtman}{article}{
   author={Gorbounov, Vassily},
   author={Schechtman, Vadim},
   title={Homological algebra and divergent series},
   journal={SIGMA Symmetry Integrability Geom. Methods Appl.},
   volume={5},
   date={2009},
   pages={Paper 034, 31},
   issn={1815-0659},
}

\bib{gulliksen}{article}{
   author={Gulliksen, Tor H.},
   title={A note on intersection multiplicities},
   conference={
      title={Algebra, algebraic topology and their interactions (Stockholm,
      1983)},
   },
   book={
      series={Lecture Notes in Math.},
      volume={1183},
      publisher={Springer, Berlin},
   },
   date={1986},
   pages={192--194},
}

\bib{kollar}{book}{
   author={Koll{\'a}r, J{\'a}nos},
   title={Singularities of the minimal model program},
   series={Cambridge Tracts in Mathematics},
   volume={200},
   note={With a collaboration of S\'andor Kov\'acs},
   publisher={Cambridge University Press, Cambridge},
   date={2013},
   pages={x+370},
}

\bib{mumford}{article}{
   author={Mumford, David},
   title={The topology of normal singularities of an algebraic surface and a
   criterion for simplicity},
   journal={Inst. Hautes \'Etudes Sci. Publ. Math.},
   number={9},
   date={1961},
   pages={5--22},
}

\bib{peskine-szpiro}{article}{
   author={Peskine, Christian},
   author={Szpiro, Lucien},
   title={Syzygies et multiplicit\'es},
   language={French},
   journal={C. R. Acad. Sci. Paris S\'er. A},
   volume={278},
   date={1974},
   pages={1421--1424},
}
\bib{serre}{book}{
   author={Serre, Jean-Pierre},
   title={Local algebra},
   series={Springer Monographs in Mathematics},
   note={Translated from the French by CheeWhye Chin and revised by the
   author},
   publisher={Springer-Verlag, Berlin},
   date={2000},
   pages={xiv+128},
   isbn={3-540-66641-9},
}

\end{biblist}
\end{bibdiv}
\end{document}